\documentclass[12pt]{amsart}
\usepackage{amsthm,amssymb,verbatim,amsmath, amsfonts, amscd,color}

\usepackage{amssymb}
\usepackage{amsthm}

\usepackage{hyperref}
\usepackage{verbatim}
\usepackage{comment}
\pagestyle{plain}


\setlength{\footskip}{10mm}

\theoremstyle{plain}\newtheorem{Theorem}{Theorem}[section]
\theoremstyle{plain}
\theoremstyle{plain}\newtheorem{Corollary}[Theorem]{Corollary}
\theoremstyle{plain}\newtheorem{Lemma}[Theorem]{Lemma}
\theoremstyle{plain}\newtheorem{Proposition}[Theorem]{Proposition}
\theoremstyle{definition}
\theoremstyle{definition}\newtheorem{Example}[Theorem]{Example}
\theoremstyle{definition}
\theoremstyle{definition}\newtheorem{Remark}[Theorem]{Remark}
\theoremstyle{definition}
\theoremstyle{definition}\newtheorem{Notation}[Theorem]{Notation}
\theoremstyle{definition}
\theoremstyle{definition}
\theoremstyle{definition}
\theoremstyle{definition}
\theoremstyle{definition}
\theoremstyle{definition}
\theoremstyle{definition}\newtheorem{Notation/Definition}
[Theorem]{Notation/Definition}
\theoremstyle{definition}

\def\CF{{\mathcal{F}}}    
\def\FM{{\mathfrak{M}}}

\def\ker{\mathrm{ker}}

\def\Ind{\mathrm{Ind}}

\def\Res{\mathrm{Res}}
\def\Inf{\mathrm{Inf}}

\def\tr{\mathrm{tr}}

\newcommand{\Sc}{{\text{Sc}}}

\begin{document}
\title{Recognition of Brauer indecomposability for a Scott module }
\date{\today}
\author{{Shigeo Koshitani and {\.I}pek Tuvay}}
\address{Department of Mathematics and Informatics, Graduate School of Science,       
Chiba University, 1-33 Yayoi-cho, Inage-ku, Chiba 263-8522, Japan.}
\email{koshitan@math.s.chiba-u.ac.jp}
\address{Mimar Sinan Fine Arts University, Department of Mathematics, 34380, Bomonti, \c{S}i\c{s}li, Istanbul, Turkey}
\email{ipek.tuvay@msgsu.edu.tr}

\thanks{The first author
was partially supported by the Japan Society for Promotion of Science (JSPS),
Grant-in-Aid for Scientific Research (C)19K03416, 2019--2021. }
\dedicatory{In the memory of Professor Hiroyuki Tachikawa}

\keywords{Brauer indecomposability, Scott modules}
\subjclass[2010]{20C20, 20C05}

\begin{abstract}
We give a handy way to have a situation that the
$kG$-Scott module with vertex $P$ remains
indecomposable under taking the Brauer construction for any subgroup $Q$ of $P$
as $k[Q\,C_G(Q)]$-module,
where $k$ is a field of characteristic $p>0$.
The motivation
is that the Brauer indecomposability of
a $p$-permutation bimodule is one of the key steps in order
to obtain a splendid stable equivalence of Morita type 
by making use of the gluing method,
that then can possibly lift to a splendid derived equivalence.
Further our result explains a hidden reason why the Brauer indecomposability of
the Scott module fails in Ishioka's recent examples.
\end{abstract}

\maketitle

\section{Introduction and notation}

\noindent
In modular representation theory of finite groups,
the Brauer construction $M(P)$ of a $p$-permutation $kG$-module $M$ with respect to 
a $p$-subgroup $P$ of a finite group $G$ plays a very important role,
where $k$ is an algebraically closed field of characteristic $p>0$.
It canonically becomes a $p$-permutation module over $kN_G(P)$ (see \cite[p.402]{Bro}).

In their paper \cite{KKM} Kessar, Kunugi and Mitsuhashi introduce a notion {Brauer indecomposability}.
Namely, $M$ is called {\it Brauer indecomposable} if the restriction
module ${\mathrm{Res}}\,^{N_G(Q)}_{Q\,C_G(Q)}\,M(Q)$ is indecomposable or zero
as $k(Q\,C_G(Q))$-module for any subgroup $Q$ of $P$.
Actually in order to get a kind of equivalence between
the module categories for the principal blocks $A$ and $B$ 
of the group algebras $kG$ and $kH$, respectively
(where $H$ is another finite group), e.g. 
in order to prove Brou\'e's abelian defect group conjecture, 
we usually first of all  would
have to face a situation such that $A$ and $B$
are stably equivalent of Morita type.
In order to do it 
we often want to check whether the $k(G\times H)$-Scott module
${\mathrm{Sc}}(G\times H, \Delta P)$ with a vertex  
$\Delta P:=\{(u,u)\in P\times P \}$ induces a stable
equivalence of Morita type between $A$ and $B$, where
$P$ is a common Sylow $p$-subgroup of $G$ and $H$
by making use of 
the gluing method due to Brou\'e (see \cite[6.3.Theorem]{Bro94}), and also
Rickard, Linckelmann and Rouquier \cite[Theorem 5.6]{Rou}.
If this is the case, then ${\mathrm{Sc}}(G\times H, \Delta P)$
has to be Brauer indecomposable
(as done in \cite{KLi, KKM, KKL, KL1, KL2, KLS}).
Therefore it should be very important to know if
${\mathrm{Sc}}(G\times H, \Delta P)$
is Brauer indecomposable or not.
Even more is that one can understand what happens in interesting Ishioka's examples \cite{I} 
from main result's point of view.
These motivate us to write this paper.

Actually, our main result is the following:

\begin{Theorem}\label{MainResult}
Let $G$ be a finite group and $M:={\mathrm{Sc}}(G,P)$ the
Scott $kG$-module with vertex $P$. 
Then the following 
are equivalent:
\begin{enumerate}
\item $M$ is Brauer indecomposable.
\item There exists an $R\unlhd G$ with $R\leq P\cap\ker(M)$ satisfying that
for every $Q$ with $R\leq Q\leq P$, $\Res^{N_G(Q)}_{C_G(Q)}\, M(Q)$ is indecomposable.
\end{enumerate}
\end{Theorem}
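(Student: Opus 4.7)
The argument combines a soft observation, two easy reductions, and one technical lemma. The Observation is that $Q$ acts trivially on $M(Q)$, so $Z(Q)=Q\cap C_G(Q)$ does as well; consequently both $\Res^{N_G(Q)}_{QC_G(Q)}M(Q)$ and $\Res^{N_G(Q)}_{C_G(Q)}M(Q)$ are inflated from the same $k[C_G(Q)/Z(Q)]$-module, and one is indecomposable iff the other is. This lets me move freely between the $QC_G(Q)$-version of Brauer indecomposability in (1) and the $C_G(Q)$-version in (2).

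The direction $(1)\Rightarrow(2)$ is then immediate by taking $R:=\{1\}$: the range ``$R\le Q\le P$'' becomes ``$Q\le P$'', and since $M=\Sc(G,P)$ has vertex $P$, $M(Q)\ne 0$ for every $Q\le P$, so the ``or zero'' clause never arises and the Observation converts (1) into (2).

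For $(2)\Rightarrow(1)$, fix $Q'\le P$ and set $Q:=Q'R$; then $R\le Q\le P$, so by hypothesis $\Res^{N_G(Q)}_{C_G(Q)}M(Q)$ is indecomposable. Because $N_G(Q')$ normalises $Q'$ and the normal subgroup $R$, it normalises $Q$, giving $N_G(Q')\le N_G(Q)$ and the chain
\[
C_G(Q)\ \le\ Q'C_G(Q')\ \le\ N_G(Q')\ \le\ N_G(Q).
\]
Indecomposability of a restriction propagates upward along such a chain (a nontrivial decomposition at a higher level would, on further restriction to $C_G(Q)$, split $\Res^{N_G(Q)}_{C_G(Q)}M(Q)$), so $\Res^{N_G(Q)}_{Q'C_G(Q')}M(Q)$ is indecomposable. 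The proof is then completed by the key lemma below, after which the Observation delivers Brauer indecomposability at $Q'$.

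\emph{Key Lemma (the main obstacle).} $M(Q')\cong \Res^{N_G(Q)}_{N_G(Q')}M(Q)$ as $kN_G(Q')$-modules. I plan to prove it by direct analysis of the Brauer quotient. Since $R\le\ker(M)$, $M^{Q'}=M^Q$ as subspaces of $M$. Put $T:=R\cap Q'$. For any $S\le Q'$ with $T\not\le S$, the map $\Tr^{ST}_S$ acts on $M^S$ as multiplication by $[T:T\cap S]\equiv 0\pmod p$, so $\Tr^{Q'}_S=\Tr^{Q'}_{ST}\circ\Tr^{ST}_S$ vanishes on $M^S$; analogously, $\Tr^Q_{S'}$ vanishes on $M^{S'}$ whenever $R\not\le S'$. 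Only subgroups in $[T,Q']$ (resp.\ $[R,Q]$) contribute, and $S\mapsto SR$, with inverse $\tilde S\mapsto \tilde S\cap Q'$, is a lattice isomorphism between these intervals preserving indices ($[Q':S]=[Q:SR]$), fixed-point subspaces ($M^S=M^{SR}$) and traces; this identifies $M(Q')$ with $M(Q)$ as $k$-spaces, and $N_G(Q')$-equivariance is automatic since $N_G(Q')$ normalises both $Q'$ and $R$. The parallel bookkeeping of the two lattices is the technical heart of the proof.
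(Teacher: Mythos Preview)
Your proposal is correct and follows essentially the same route as the paper: your Key Lemma is exactly the paper's Proposition~2.9(i), proved via the same trace-vanishing and lattice-correspondence argument (the paper's Lemmas~2.6--2.8), and your deduction of $(2)\Rightarrow(1)$ from it mirrors Proposition~2.9(ii). Your Observation and the remark that $M(Q)\ne 0$ for $Q\le P$ make explicit two small points the paper leaves implicit in its one-line ``follows from the definition'' for $(1)\Rightarrow(2)$.
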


\begin{Corollary}\label{indexP} 
Let $G$ be a finite group and $M:={\mathrm{Sc}}(G,P)$ the
Scott $kG$-module with vertex $P$.
 Suppose that $p\,{\not|}\,|N_G(P)/P\,C_G(P|$ and that
there exists an $R\unlhd G$ with $R\leq P\cap\ker(M)$ and $|P/R|=p.$
Then the following are equivalent:
\begin{enumerate}
\item $M$ is Brauer indecomposable.
\item $\Res^{G}_{C_G(R)}\, M$ is indecomposable,
\end{enumerate}
 
\end{Corollary}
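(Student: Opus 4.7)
The plan is to deduce the corollary from Theorem \ref{MainResult} by taking the normal subgroup given in the hypothesis as the ``$R$'' of that theorem. Because $|P/R|=p$, the set of subgroups $Q$ with $R\le Q\le P$ consists of just two elements, $Q=R$ and $Q=P$, so condition (2) of Theorem \ref{MainResult} reduces to checking indecomposability of $\Res^{N_G(Q)}_{C_G(Q)}M(Q)$ at these two values of $Q$.

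For $Q=R$, I would observe that $R\unlhd G$ forces $N_G(R)=G$, and that $R\le\ker(M)$ makes $R$ act trivially on $M$. A direct computation of the Brauer construction then gives $M(R)=M$ as $kG$-modules: one has $M^R=M$, and for any proper subgroup $S\lneq R$ the relative trace $\Tr^R_S$ acts on $M^S=M$ as multiplication by the positive $p$-power $[R:S]$, which is $0$ in $k$. Hence the $Q=R$ instance of condition (2) of Theorem \ref{MainResult} coincides with condition (2) of the corollary.

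For $Q=P$, I would show that the hypothesis $p\nmid|N_G(P)/PC_G(P)|$ automatically forces $\Res^{N_G(P)}_{C_G(P)}M(P)$ to be indecomposable, so no extra condition is needed. The key point is that $M(P)\cong\Sc(N_G(P),P)$ has $P$ in its kernel and, viewed through this quotient, coincides with the inflation of the projective cover of the trivial $k[N_G(P)/P]$-module. Since $PC_G(P)/P\unlhd N_G(P)/P$ is normal of $p'$-index (and in particular contains a Sylow $p$-subgroup of $N_G(P)/P$), a standard projective-cover / Scott-summand argument, using uniqueness of the trivial-source indecomposable summand with $k$ in its top, identifies the restricted module with the projective cover of the trivial $k[PC_G(P)/P]$-module; under the canonical isomorphism $PC_G(P)/P\cong C_G(P)/Z(P)$ this is in turn the inflation of the projective cover of $k$ in $k[C_G(P)/Z(P)]$ to $C_G(P)$, which remains indecomposable since inflation preserves endomorphism rings.

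The main obstacle will be the $Q=P$ step: it asks one to verify the standard but non-trivial fact that the restriction of the projective cover of the trivial module across the normal $p'$-index inclusion $PC_G(P)/P\unlhd N_G(P)/P$ is again indecomposable, a fact that also appears implicitly in the earlier papers \cite{KKM, KKL, KLS}. The $Q=R$ step is a one-line trace computation, and the remainder of the proof is mechanical bookkeeping with Theorem \ref{MainResult}.
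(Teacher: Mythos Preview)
Your proposal is correct and follows essentially the same route as the paper: reduce to Theorem~\ref{MainResult}, note that $|P/R|=p$ leaves only $Q=R$ and $Q=P$ to check, identify $\Res^{N_G(R)}_{C_G(R)}M(R)$ with $\Res^G_{C_G(R)}M$ via $R\unlhd G$ and $R\le\ker(M)$, and handle $Q=P$ using $p\nmid|N_G(P)/PC_G(P)|$. The only difference is that for the $Q=P$ step the paper simply invokes \cite[Lemma~4.3(i)]{KKM} rather than sketching the projective-cover argument you outline.
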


\noindent
These generalize \cite{I} and closely related to \cite{IK}, and 
there are results on Brauer indecomposability of Scott modules also
in \cite{KT19a, KT19b, KT21, T}.

\begin{Notation} Besides the notation explained above we need the following notation and terminology.
In this paper $G$ is always a finite group, $k$ is an algebraically closed field of characteristic $p>0$
and $kG$ is the group algebra of $G$ over $k$.

By $H\leq G$ and $H \unlhd G$ we mean that $H$ is a subgroup and a normal subgroup of $G$, 
respectively. We mean by $H<G$ that $H$ is a {\it proper} subgroup of $G$.
We write $O_{p}(G)$ for the largest normal ${p}$-subgroup of $G$.
For $x, y\in G$ we set $^y\!x:= yxy^{-1}$ and $x^y:=y^{-1}xy$ .
Further for $H\leq G$ and $g\in G$ 
we set $^g\!H:=\{\, ^g\!h \, |\, \forall h\in H \}$.
For two groups $K$ and $L$ we write $K\rtimes L$ for a semi-direct product of $K$ by $L$
where $K \unlhd (K\rtimes L)$.
For an integer $n\geq 1$ we mean by $C_n$, $S_n$ and $A_n$
the cyclic group of order $n$, the symmetric and alternating groups 
of degree $n$, respectively.
Further  $D_{2^n}$ is the dihedral group of order $2^n$.

We mean by a $kG$-module a finitely generated left $kG$-module. 
For a $kG$-module $M$ and a $p$-subgroup $P$ of $G$ the Brauer construction $M(P)$ is
defined as in \cite[\S27]{Th} or in \cite[p.402]{Bro}. For such $G$ and $P$ we write 
$\mathcal F_P(G)$ for
the fusion system of $G$ over $P$ as in \cite[I.Definition 1.1]{AKO}.
For two $kG$-modules $M$ and $L$, we write $L\,|\,M$ if $L$ is (isomorphic to) 
a direct summand of $M$ as a $kG$-module.
We write $k_G$ for the trivial $kG$-module, and $1_G$ for the ordinary trivial character of $G$. 
For $H\leq G$, a $kG$-module $M$ and a $kH$-module $N$, 
we write ${\mathrm{Res}}^G_H\,M$ for the restriction module of
$M$ from $G$ to $H$, and ${\mathrm{Ind}}_H^G\,N$ 
for the induced module of $N$ from $H$ to $G$.
When $K\leq H\leq G$ and $M$ is a $kG$-module, 
we can define the (relative) trace map ${\mathrm{tr}}_K^H$
for an element $m\in M^K$, that is, 
${\mathrm{tr}}_K^H\,(m):=\sum_{h\in [H/K]}\,hm$ where
$[H/K]$ is the coset representatives and 
$M^K:=\{m\in M\,|\,\kappa m=m \ \forall\kappa\in K\}$
(see \cite[\S 18]{Th}). 
When $M$ is a $kG$-module, 
the kernel of $M$ is defined by 
$\ker(M):=\{g\in G\,|\, gm=m \ \forall m\in M\}$.

For $H\leq G$ we denote by ${\mathrm{Sc}}(G,H)$ 
the (Alperin-)Scott $kG$-module with respect to $H$.
By definition, ${\mathrm{Sc}}(G,H)$ is the unique indecomposable direct summand
of ${\mathrm{Ind}}_H^G\,k_H$ which contains $k_G$ in its top.
We refer the reader to 
\cite[\S2] {Bro} and \cite[Chap.4, \S 8.4]{NT} for further details on Scott modules.

For the other notations and terminologies, see the books
\cite{NT} 
and \cite{AKO}.
\end{Notation}
The organization of this paper is as follows.
In \S2 we shall give several lemmas
that are useful of our aim, and 
we shall prove our main theorem and a couple of examples in \S3.

\section{Lemmas}
Throughout this section we fix a finite group $G$ and its $p$-subgroup $P$, and further set
$M:=\Sc(G,P)$ and $\mathcal F:=\mathcal F_P(G)$.

\begin{Lemma}\label{kernelP}
If $p\,{\not |}\,\,|N_G(P)/P \, C_G(P)|$ and $P\leq\ker(M)$,
then $M$ is Brauer indecomposable. 
\end{Lemma}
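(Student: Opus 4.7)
The plan is to verify the Brauer indecomposability criterion, namely that $\Res^{N_G(Q)}_{QC_G(Q)}M(Q)$ is indecomposable (or zero) for every $Q\leq P$, first reducing all such $Q$ to the single case $Q=P$ via the hypothesis $P\leq\ker(M)$, and then handling that case using the hypothesis $p\nmid[N_G(P):PC_G(P)]$.

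First I would compute $M(Q)$ for $Q\leq P$ explicitly. Since $Q\leq\ker(M)$, each element of $Q$ fixes every $m\in M$, so $M^Q=M$; and for any $R\lneq Q$, the relative trace $\tr^Q_R$ acts on $M^R=M$ as multiplication by $[Q:R]$, a positive power of $p$, which vanishes in $k$. Hence $M(Q)=M$ as $k$-vector space with the $N_G(Q)$-action inherited from $G$. Because $Q$ acts trivially, indecomposability of $\Res^{N_G(Q)}_{QC_G(Q)}M(Q)$ is equivalent to indecomposability of $\Res^G_{C_G(Q)}M$. Observing that $Q\leq P$ forces $C_G(Q)\supseteq C_G(P)$, any nontrivial decomposition of $\Res^G_{C_G(Q)}M$ restricts to a nontrivial one of $\Res^G_{C_G(P)}M$; hence it suffices to prove that $\Res^G_{C_G(P)}M$ is indecomposable.

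For this I invoke the standard identification $M(P)\cong\Sc(N_G(P),P)$ (valid because $P$ is automatically fully $\mathcal{F}$-normalized), combined with $M(P)=\Res^G_{N_G(P)}M$ from the first step, to obtain $\Res^G_{N_G(P)}M\cong\Sc(N_G(P),P)$. Since $P\unlhd N_G(P)$, this Scott module is the inflation $\Inf^{N_G(P)}_{\bar N}\,P_{\bar N}(k_{\bar N})$ of the projective cover of the trivial $k\bar N$-module, where $\bar N:=N_G(P)/P$. Restricting further to $C_G(P)$, whose image in $\bar N$ is $\bar C:=C_G(P)P/P$, and interchanging restriction with inflation, we obtain
\[
\Res^G_{C_G(P)}M \;\cong\; \Inf^{C_G(P)}_{\bar C}\bigl(\Res^{\bar N}_{\bar C}\,P_{\bar N}(k_{\bar N})\bigr).
\]

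Now $\bar C\unlhd\bar N$ since the centralizer of a normal subgroup is normal, and $[\bar N:\bar C]=[N_G(P):PC_G(P)]$ is coprime to $p$ by hypothesis. For a normal subgroup of $p'$-index, the Jacobson radical commutes with restriction, so the head of $\Res^{\bar N}_{\bar C}\,P_{\bar N}(k_{\bar N})$ equals $\Res^{\bar N}_{\bar C}\,k_{\bar N}=k_{\bar C}$, which is simple. A projective module with simple head is indecomposable and coincides with the projective cover of the trivial module; since inflation preserves indecomposability, $\Res^G_{C_G(P)}M$ is indecomposable, as required. The main obstacle is this last step, where the hypothesis $p\nmid[N_G(P):PC_G(P)]$ enters via the standard radical-commutation property for normal $p'$-extensions; everything else is bookkeeping.
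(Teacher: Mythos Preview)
Your argument is correct and follows the same overall strategy as the paper: first show $M(Q)=\Res^G_{N_G(Q)}M$ for all $Q\leq P$ using $P\leq\ker(M)$, then reduce to showing $\Res^G_{C_G(P)}M$ is indecomposable via the inclusion $C_G(P)\leq C_G(Q)$, and finally establish that last indecomposability using the hypothesis $p\nmid[N_G(P):PC_G(P)]$.

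The only real difference is in the final step. The paper simply cites \cite[Lemma~4.3(i)]{KKM} for the indecomposability of $\Res^{N_G(P)}_{C_G(P)}M(P)$, whereas you give a self-contained argument: identify $M(P)\cong\Sc(N_G(P),P)$, realize this as the inflation of the projective cover $P_{\bar N}(k_{\bar N})$ for $\bar N=N_G(P)/P$, and then use the fact that $J(k\bar N)=J(k\bar C)\cdot k\bar N$ when $[\bar N:\bar C]$ is prime to $p$ to conclude that the restriction to $\bar C$ has simple head $k_{\bar C}$ and is hence indecomposable. This is a perfectly good (and essentially standard) way to unpack what the cited lemma says; it buys you independence from the reference at the cost of a few extra lines. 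One small remark: your parenthetical ``valid because $P$ is automatically fully $\mathcal F$-normalized'' is not needed here---the identification $\Sc(G,P)(P)\cong\Sc(N_G(P),P)$ holds for any $p$-subgroup $P$ that is a vertex of the Scott module, without any appeal to saturation of $\mathcal F_P(G)$.
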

\begin{proof} 
It holds $\Res^G_{N_G(P)}\, M = M(P)$
by \cite[Exercise (27.4)]{Th}. So that \cite[Lemma 4.3(i)]{KKM} yields that
\begin{equation}\label{C_G(P)}
\Res^G_{C_G(P)}\,M \text{ is indecomposable.}
\end{equation}
Now, take any $Q\leq P$. Clearly $Q\leq\ker (M)$, so that 
$\Res^G_{N_G(Q)}\,M = M(Q)$
as above. Thus, 
$\Res^{N_G(Q)}_{C_G(Q)} M(Q) = \Res^G_{C_G(Q)}\,M$,
which implies from (\ref{C_G(P)}) that 
$\Res^G_{C_G(Q)}\,M$ is indecomposable
since $C_G(P)\leq C_G(Q)$. Namely $\Res^{N_G(Q)}_{C_G(Q)}\,M(Q)$ is
indecomposable.
\end{proof}

\begin{Corollary}\label{p'group}
If $P\unlhd G$ and $p\,{\not|}\,\,|G/P \, C_G(P)|$,
then $M$ is Brauer indecomposable. 
\end{Corollary}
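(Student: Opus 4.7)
The plan is to obtain this as an immediate consequence of Lemma \ref{kernelP}. First I would observe that the assumption $P \unlhd G$ gives $N_G(P) = G$, so the numerical hypothesis $p \nmid |G/P\,C_G(P)|$ stated in the corollary is literally the hypothesis $p \nmid |N_G(P)/P\,C_G(P)|$ required by the lemma. It therefore suffices to verify the second hypothesis of Lemma \ref{kernelP}, namely $P \leq \ker(M)$.

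To check this, I would exploit the defining property of the Scott module: $M = \Sc(G,P)$ is by definition a direct summand of $\Ind_P^G\, k_P$. Since any element of $G$ acting trivially on a module acts trivially on each of its direct summands, one has $\ker(\Ind_P^G\, k_P) \leq \ker(M)$, and the task reduces to showing that $P$ acts trivially on $\Ind_P^G\, k_P = kG \tenkP k_P$. For arbitrary $u \in P$ and $g \in G$, I would compute
$u \cdot (g \otimes 1) = (ug) \otimes 1 = g \cdot (g^{-1}ug) \otimes 1 = g \otimes 1,$
where the sliding of $g^{-1}ug$ across the tensor is legitimate precisely because $g^{-1}ug \in P$ by normality, and this element acts trivially on $k_P$. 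Hence $P \leq \ker(\Ind_P^G\, k_P) \leq \ker(M)$.

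With both hypotheses of Lemma \ref{kernelP} now in hand, the corollary follows at once. I do not anticipate any real obstacle; the statement is essentially the packaging of Lemma \ref{kernelP} together with the elementary observation that a normal $p$-subgroup $P$ automatically lies in the kernel of the induced module $\Ind_P^G\, k_P$, and therefore in the kernel of any of its indecomposable summands, in particular $\Sc(G,P)$.
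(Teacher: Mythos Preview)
Your proof is correct and follows essentially the same approach as the paper: both verify $P \leq \ker(M)$ via $M \mid \Ind_P^G k_P$ and the normality of $P$, then invoke Lemma~\ref{kernelP}. The paper phrases the kernel check as $\Ind_P^G k \cong k(G/P)$ rather than your explicit tensor computation, but this is purely cosmetic.
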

\begin{proof}
Since $P\unlhd G$ we have that 
$M \ | \ \Ind_P^G\,k \cong k(G/P)$ so that $P\leq \ker(M)$ and 
the result follows from Lemma \ref{kernelP}.
\end{proof}

\begin{Example}\label{S4}
Suppose that $G:=S_4$, $p=2$ and $P:=O_2(G)$
(then $P\cong C_2\times C_2$ and
$G=P\rtimes S_3$, so that $A_4=P\rtimes C_3<G$).
Set $H:=A_4$, then $M=\Sc(G,H)$ by \cite[Chap.4, Corollary 8.5]{NT}.
Note that $P\vartriangleleft G$ and $M(P)=M$ by the definition of Brauer construction (quotient).
So that Green's indecomposability theorem implies that
$M=\Ind_H^G\,k_H \cong \boxed{\begin{matrix} k_G\\k_G\end{matrix}}$ 
(a uniserial $kG$-module of length $2$
with two composition factors $k_G$) that affords the ordinary character
$1_G+\chi_1$ where $\chi_1$ is (so-called) the sign character of degree $1$.
Since $\Res^G_P\,1_G = \Res^G_P\,\chi_1 = 1_P$, it follows from
\cite[II Theorem 12.4(iii)]{Lan} that 
$\Res^G_P\,M\cong k_P\oplus k_P.$

Thus $\Res^{N_G(P)}_{C_G(P)}M(P)=\Res^G_P\,M\cong k_P\oplus k_P$,
namely, $M$ is {\it not} Brauer indecomposable.
Note that although $P$ is normal in $G$, we have that $N_G(P)/P\,C_G(P)=G/P\cong S_3$ which 
is not a $2'$-group. See Corollary \ref{p'group}.
\end{Example}

\begin{Corollary}\label{saturatedness}
If $P\unlhd G$, then the following are equivalent.
\begin{enumerate}
\item $\mathcal F:= \mathcal F_P(G)$ is saturated.
\item $p\,{\not|}\,|G/P\,C_G(P)|$.
\item $M$ is Brauer indecomposable.
\end{enumerate}
\end{Corollary}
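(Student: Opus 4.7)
The plan is to prove (2) $\Rightarrow$ (3) $\Rightarrow$ (2) together with (1) $\Leftrightarrow$ (2). The direction (2) $\Rightarrow$ (3) is immediate from Corollary~\ref{p'group}. For (1) $\Leftrightarrow$ (2): with $P\unlhd G$, every morphism $c_g\colon Q\to Q'$ in $\mathcal F$ (where $g\in G$, $Q,Q'\le P$) is the restriction of $c_g\in\Aut_\mathcal F(P)=G/C_G(P)$, so $\mathcal F$ is generated by its automizer at $P$, the extension axiom of saturation is automatic, and saturation collapses to the Sylow axiom at $P$, namely $\Aut_P(P)=PC_G(P)/C_G(P)\in\Syl_p(G/C_G(P))$; cf.\ \cite[\S I.2--I.3]{AKO}. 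Since $[G/C_G(P):\Aut_P(P)]=|G/PC_G(P)|$, this is exactly~(2). The substantive content is therefore (3) $\Rightarrow$ (2), which I establish by contraposition.

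Assume $p$ divides $|G/PC_G(P)|$. Because $P\unlhd G$, $M\mid\Ind_P^G k_P\cong k[G/P]$, so $P\le\ker(M)$; as $M$ is indecomposable with $k_G$ in its top, $M$ is the inflation to $G$ of the projective cover $U:=P(k_{\bar G})$, where $\bar G:=G/P$. Taking $Q:=P$ in the definition of Brauer indecomposability, $M(P)=M$ and $N_G(P)=G$, so I only need to show that $\Res^G_{PC_G(P)}M$ is decomposable. With $\bar C:=PC_G(P)/P\unlhd\bar G$, this restriction is the inflation of $\Res^{\bar G}_{\bar C}U$; as $U$ is projective over $k\bar G$, the restriction $\Res^{\bar G}_{\bar C}U$ is projective over $k\bar C$, hence a direct sum of projective indecomposable summands.

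To count the copies of $P(k_{\bar C})$ appearing in $\Res^{\bar G}_{\bar C}U$, apply Frobenius reciprocity:
\[
\dim\Hom_{k\bar C}\!\bigl(\Res^{\bar G}_{\bar C}U,\,k_{\bar C}\bigr)
=\dim\Hom_{k\bar G}\!\bigl(U,\,\Ind^{\bar G}_{\bar C}k_{\bar C}\bigr)
=\dim\Hom_{k\bar G}\!\bigl(U,\,k[\bar G/\bar C]\bigr).
\]
Because $\bar C$ acts trivially on $k[\bar G/\bar C]$, every $k\bar G$-composition factor of $k[\bar G/\bar C]$ is inflated from a simple $k[\bar G/\bar C]$-module, so the final dimension equals the multiplicity of $k_{\bar G/\bar C}$ as a composition factor of the regular $k[\bar G/\bar C]$-module, namely $\dim P_{k[\bar G/\bar C]}(k_{\bar G/\bar C})$. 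By the classical divisibility $|H|_p\mid\dim P(k_H)$ for every finite group $H$, this is at least $|\bar G/\bar C|_p\ge p\ge 2$. Hence $\Res^{\bar G}_{\bar C}U$ contains at least two $P(k_{\bar C})$-summands, so is decomposable, and $M$ is not Brauer indecomposable.

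The main obstacle is securing that the Frobenius count yields a multiplicity strictly greater than one; this rests on the standard divisibility $|H|_p\mid\dim P(k_H)$ applied to $H=\bar G/\bar C$ together with the hypothesis that $p$ divides $|\bar G/\bar C|=|G/PC_G(P)|$.
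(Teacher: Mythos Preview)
Your proof is correct, and the route differs from the paper's in the key implication. The paper argues $(1)\Rightarrow(2)$ via the Sylow axiom, $(2)\Rightarrow(3)$ via Corollary~\ref{p'group} (as you do), and then closes the cycle with $(3)\Rightarrow(1)$ by invoking \cite[Theorem~1.1]{KKM} as a black box. You instead prove $(3)\Rightarrow(2)$ directly by contraposition: identifying $M$ as the inflation of $P(k_{\bar G})$ and using Frobenius reciprocity together with the divisibility $|H|_p\mid\dim P(k_H)$ to force at least two copies of $P(k_{\bar C})$ in the restriction. This is a self-contained argument that avoids the external dependence on \cite{KKM}; the paper's route is shorter on the page but hides the work inside that citation. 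Your handling of $(1)\Leftrightarrow(2)$ is also more direct than the paper's cycle, and in fact matches what the paper's subsequent Remark points out can be done independently of~$(3)$.
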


\begin{proof}
$(1)\Rightarrow (2):$ Follows from the Sylow Axiom for saturated fusion systems.

$(2) \Rightarrow (3):$ Follows from Corollary \ref{p'group}.

$(3) \Rightarrow (1):$ Follows from \cite[Theorem 1.1]{KKM}.
\end{proof}

\begin{Remark}
If $P$ is a normal $p$-subgroup of $G$, then any $\mathcal F$-isomorphism 
can be extended to an 
$\CF$-automorphism of $P$, so in particular $\CF$ satisfies the 
Extension Axiom.
As a result, the equivalence of $(1)$ and $(2)$ of Corollary \ref{saturatedness} 
is obtained automatically (see \cite[\S 2 of Part I]{AKO}).
\end{Remark}

\begin{Lemma}\label{QR}\label{tr1}
Let $\FM$ be an indecomposable $p$-permutation $kG$-module with vertex $P$, and let $R\unlhd G$ 
be a $p$-subgroup such that $R\leq \ker(\FM)$. Assume that $Q\leq P$.
Then for any $H$ such that $R \not \leq H < QR$, we have that $\tr_H^{QR}(\FM^H)=0$.
\end{Lemma}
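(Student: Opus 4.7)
The plan is to factor the trace $\tr_H^{QR}$ through the intermediate subgroup $HR$ and exploit that $R$ acts trivially on $\FM$. Concretely, since $R \unlhd G$ we have $R \unlhd QR$, so $HR$ is a subgroup with $H \leq HR \leq QR$, and the transitivity of the relative trace gives
\[
\tr_H^{QR} \;=\; \tr_{HR}^{QR}\circ \tr_H^{HR}
\]
on $\FM^H$. So it suffices to show $\tr_H^{HR}(m)=0$ for every $m\in \FM^H$.

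The key step is the computation of $\tr_H^{HR}(m)$. Since $R\unlhd HR$, we may choose coset representatives of $H$ in $HR$ to lie in $R$; more precisely, $HR=\bigsqcup_{r\in [R/(R\cap H)]} rH$. Therefore
\[
\tr_H^{HR}(m)\;=\;\sum_{r\in [R/(R\cap H)]} rm.
\]
Because $R\leq \ker(\FM)$, every $r\in R$ acts as the identity on $\FM$, so each summand equals $m$, and
\[
\tr_H^{HR}(m)\;=\;[R:R\cap H]\cdot m.
\]

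Now I use the hypothesis $R\not\leq H$: this forces $R\cap H$ to be a \emph{proper} subgroup of $R$. Since $R$ is a $p$-group, the index $[R:R\cap H]$ is a positive power of $p$, hence vanishes in $k$ (as $\mathrm{char}\,k=p$). Thus $\tr_H^{HR}(m)=0$, and applying $\tr_{HR}^{QR}$ gives $\tr_H^{QR}(m)=0$, as desired.

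There is no serious obstacle here; the only thing to be careful about is to justify the factorization (which requires $R$ to normalize $H$, and this follows from $R\unlhd G$) and to ensure that the chosen coset representatives of $H$ in $HR$ can indeed be taken from $R$. Both are standard consequences of $R$ being normal in $G$, so the proof reduces to the short triangle of observations above.
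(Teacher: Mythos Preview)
Your argument is correct and matches the paper's proof essentially step for step: factor $\tr_H^{QR}=\tr_{HR}^{QR}\circ\tr_H^{HR}$, choose coset representatives of $H$ in $HR$ from $R$, use $R\leq\ker(\FM)$ to reduce $\tr_H^{HR}(m)$ to $[R:R\cap H]\cdot m$, and observe this index is divisible by $p$ since $R\not\leq H$. One small wording slip: what makes $HR$ a subgroup is that $H$ normalizes $R$ (because $R\unlhd G$), not that $R$ normalizes $H$; the transitivity of the trace then only needs $H\leq HR\leq QR$.
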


\begin{proof}
Since $R\not\leq H$, we have $H < HR$. The representatives of the set of left cosets 
of $H$ in $HR$ can be chosen from $R$ and the number of the left cosets is divisible by $p$. 
So, since $R$ acts trivially on $\FM$
$$\tr_H^{HR}(m)=\sum_{r} r m=\sum_{r} m=0 \text{ for every }m\in \FM^H$$
where $r$ runs over the different left cosets of $H$ in $HR$. Then 
$$\tr_H^{QR}(\FM^H)= (\tr_{HR}^{QR}\circ\tr_H^{HR})(\FM^H) =0$$ as desired.
\end{proof}

\begin{Lemma}\label{Q}\label{tr2}
Let $\FM$ be an indecomposable $p$-permutation $kG$-module with vertex $P$, and let $R\unlhd G$ 
be a $p$-subgroup such that $R\leq \ker(\FM)$. Assume that $Q\leq P$.
Then for any $K$ with $(Q\cap R) \not \leq K < Q$, we have that $\tr_K^{Q}(\FM^K)=0$.
\end{Lemma}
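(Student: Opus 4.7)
The plan is to imitate the proof of Lemma \ref{tr1}, but localized inside $Q$: replace the role of $R$ by $Q\cap R$ and the role of the ambient group $QR$ by $Q$. The hypothesis $(Q\cap R)\not\leq K$ should supply precisely the proper containment of $p$-groups needed to extract a factor of $p$ from an index.

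Concretely, I would introduce the intermediate subgroup $L:=K(Q\cap R)$. Since $R\unlhd G$ and $K\leq Q$, the group $Q$ normalizes $R$ and hence $Q\cap R$; in particular $K$ normalizes $Q\cap R$, so $L$ is a subgroup of $Q$. The hypothesis $(Q\cap R)\not\leq K$ gives $K<L\leq Q$. I would then factor
$$\tr_K^Q \;=\; \tr_L^Q\circ\tr_K^L$$
and aim to show that already the inner trace vanishes on $\FM^K$.

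For $m\in\FM^K$, I would choose coset representatives of $K$ in $L$ from inside $Q\cap R$, using the isomorphism $L/K\cong (Q\cap R)/(K\cap R)$ (which rests on the identity $K\cap(Q\cap R)=K\cap R$, valid because $K\leq Q$). Each such representative lies in $R\leq\ker(\FM)$, so it acts trivially on $m$, giving
$$\tr_K^L(m)\;=\;[Q\cap R:K\cap R]\cdot m.$$
The condition $(Q\cap R)\not\leq K$ forces $K\cap R$ to be a proper subgroup of the $p$-group $Q\cap R$, so $p$ divides $[Q\cap R:K\cap R]$, and $\tr_K^L(m)=0$ in $\FM$. Applying $\tr_L^Q$ yields $\tr_K^Q(m)=0$.

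The only real obstacle is bookkeeping: one must confirm that $L$ is genuinely a subgroup (using normality of $R$ and $K\leq Q$) and that $K\cap(Q\cap R)=K\cap R$ (again using $K\leq Q$). Once those identities are in hand, the counting is a direct analogue of Lemma \ref{tr1}, with $Q\cap R$ playing the role formerly played by $R$.
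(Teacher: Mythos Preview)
Your proposal is correct and follows essentially the same approach as the paper: set $L=K(Q\cap R)$, factor $\tr_K^Q=\tr_L^Q\circ\tr_K^L$, choose coset representatives of $K$ in $L$ from $Q\cap R\subseteq R\leq\ker(\FM)$, and use that $K<L$ forces $p$ to divide $|L:K|$. The paper's proof is slightly terser (it does not spell out the normality check or the isomorphism $L/K\cong (Q\cap R)/(K\cap R)$), but the substance is identical.
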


\begin{proof}
Since $(Q\cap R)\not\leq K$, we have $K < (Q\cap R)K\leq Q$. The representatives of the set of left cosets 
of $K$ in $(Q\cap R)K$ can be chosen from $R$ and the number of the left cosets is divisible by $p$. 
So, since $R$ acts trivially on $\FM$, 
$$\tr_K^{(Q\cap R)K}(m)=\sum_{r} r m=\sum_{r} m=0 \text{ for every }m\in \FM^K$$
where $r$ runs over the different left cosets of $K$ in $(Q\cap R)K$. Then 
$$\tr_K^{Q}(\FM^K)= (\tr_{(Q\cap R)K}^{Q}\circ\tr_K^{(Q\cap R)K})(\FM^K) =0$$ as desired.
\end{proof}

\begin{Lemma}\label{correspondence}
Assume that $R\unlhd G$ and $Q\leq G$.
\begin{enumerate}
\item[\rm (i)] For $K$ with $(Q\cap R)\leq K\leq Q$, $KR=QR$ if and only if $K=Q$. 
\item[\rm (ii)] There is a bijective correspondence between the following 
sets: $I_1:=\{K \ | \ (Q\cap R) \leq K <Q\}$ and $I_2:=\{H \ | \ R\leq H <QR\}$ 
given by $K \mapsto KR$.
\end{enumerate}
\end{Lemma}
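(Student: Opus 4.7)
The plan is to handle (i) directly by a short element chase, and then to bootstrap to (ii) by exhibiting $H \mapsto H\cap Q$ as a two-sided inverse to $K \mapsto KR$.

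For (i), the direction $K=Q \Rightarrow KR=QR$ is trivial. For the converse I would take an arbitrary $q\in Q$; since $q\in QR = KR$, I may write $q=kr$ with $k\in K$ and $r\in R$. Then $r=k^{-1}q$ lies in $Q$, hence in $Q\cap R \leq K$, so $q=kr\in K$. This gives $Q\leq K$, and combined with $K\leq Q$ yields $K=Q$.

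For (ii), the well-definedness of $K \mapsto KR$ as a map $I_1 \to I_2$ is immediate from (i): one has $R\leq KR\leq QR$ automatically, and $K<Q$ forces $KR<QR$ by the contrapositive of (i). To prove the map is bijective I would construct the inverse $H \mapsto H\cap Q$. Well-definedness $I_2 \to I_1$ is clear, since $Q\cap R \leq H\cap Q\leq Q$, and $H\cap Q=Q$ would imply $Q\leq H$, hence $QR\leq H$, contradicting $H<QR$. The composition $K\mapsto KR\cap Q$ reduces to the same element chase as in (i): any $q=kr\in KR\cap Q$ forces $r\in Q\cap R\leq K$, so $q\in K$; the reverse inclusion $K\leq KR\cap Q$ is obvious. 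The other composition $H\mapsto (H\cap Q)R$ is checked by writing an arbitrary $h\in H\leq QR$ as $h=qr$ with $q\in Q$, $r\in R$, and noting that $q=hr^{-1}\in H$ because $r^{-1}\in R\leq H$; thus $h=qr\in (H\cap Q)R$, while the reverse inclusion $(H\cap Q)R\leq H$ again uses $R\leq H$.

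This is essentially routine group theory and no genuine obstacle is expected; the normality of $R$ ensures $KR$ is a subgroup, and the hypothesis $R\leq H$ built into $I_2$ is precisely what makes the inverse direction work. The only subtlety is that the strict inclusions on both sides must be preserved, and that is exactly the content of (i), which is why (i) is stated and proved separately before being invoked in (ii).
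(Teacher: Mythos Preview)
Your proof is correct and essentially the same as the paper's. The only cosmetic difference is organizational: you exhibit $H\mapsto H\cap Q$ as a two-sided inverse and verify both compositions, whereas the paper proves surjectivity (by setting $K:=Q\cap H$ and showing $KR=H$) and injectivity (by showing $KR=K'R\Rightarrow K=K'$) separately; the underlying element chases are identical.
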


\begin{proof} 

(i)
We always have that if $K=Q$, then $KR=QR$.
Now suppose that $KR=QR$. It follows that for any $q\in Q$, 
there exist $\kappa\in K$ and $r\in R$ such 
that $q=\kappa r$. Then $r= \kappa^{-1} q$ and $K\leq Q$ implies that 
$r\in Q$. Thus $r\in (Q\cap R)$. Since $(Q\cap R) \leq K$, 
we have that $r\in K$, so that $q=\kappa r\in K$. Hence $Q\leq K$. 

(ii)
First, by (i) the correspondence is well-defined (a map).

Surjectivity:
Take any $H\in I_2$. Set $K:=Q\cap H$. Then, $Q\cap R\leq Q\cap H=:K\leq Q$. Further,
if $K=Q$, then $Q\cap H=:K=Q$ so $Q\leq H$ so that $QR\leq HR=H$ since $R\leq H$, that
contradicts the fact that $H\in I_2$. Hence $K<Q$, that shows that $K\in I_1$.  
Now, we claim that $KR=H$. Obviously $KR\leq HH=H$. 
Suppose that $KR<H$. Then there exists an element $h\in H\backslash KR$. Write
$h=:qr$ for elements $q\in Q$ and $r\in R$ since $H\leq QR$. Hence
$q=hr^{-1}$ and since $qr\,{\not\in}\,( Q\cap H)R$, $q\,{\not\in}\,H$ which means that
$r\,{\not\in}\,H$, a contradiction since $H\in I_2$. Thus, $KR=H$, that yields that
the correspondence is surjective.

Injectivity: Suppose that $KR=K'R$ for $K, K'\in I_1$. Take any $k'\in K'$. Then,
since $k'\in K'R=KR$, there exist 
$\kappa\in K$ and $r\in R$ with $k'=\kappa r$.
Hence, $R\ni r={\kappa}^{-1}k' \in Q$, that yields that ${\kappa}^{-1}k'=r\in Q\cap R\leq K$, 
and hence $k'\in K$. Thus, $K'\leq K$. Similarly we get $K\leq K'$.

\end{proof}

\begin{Lemma}\label{cosets}

Let $R\,\unlhd\, G$ and $Q, K\leq G$ such that $(Q\cap R) \leq K < Q$, and set $n:=|Q:K|$.
Then there exist elements $q_1, \cdots, q_n \in Q$ such that
$Q=\bigsqcup_{i=1}^n\,q_i K$ (disjoint union) and 
$QR=\bigsqcup_{i=1}^n\,q_i KR$ (disjoint union).

\end{Lemma}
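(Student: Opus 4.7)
The plan is to choose $q_1, \dots, q_n \in Q$ as any set of left coset representatives of $K$ in $Q$, so that the first decomposition $Q = \bigsqcup_{i=1}^n q_i K$ holds tautologically. All the work is then to verify that these \emph{same} representatives also give a disjoint union $QR = \bigsqcup_{i=1}^n q_i KR$.

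My first step is a counting check: I would like to confirm that the number of left cosets of $KR$ in $QR$ is exactly $n$, so that once I prove the $q_i KR$ are distinct, no further cosets can be missed. Since $K \leq Q$ we have $K \cap R \leq Q \cap R$, and the hypothesis $(Q\cap R)\leq K$ forces $Q \cap R \leq K \cap R$, hence equality. Then the standard formula $|XY|=|X||Y|/|X\cap Y|$ applied to both $QR$ and $KR$ gives $|QR:KR|=|Q:K|=n$.

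The key step, and the main point where the hypothesis $(Q\cap R)\leq K$ is actually used, is disjointness of the cosets $q_i KR$. Suppose $q_i KR = q_j KR$; then $q_j^{-1}q_i \in KR$, so we may write $q_j^{-1}q_i = kr$ with $k\in K$, $r\in R$. The trick is to isolate $r$: from $r = k^{-1}q_j^{-1}q_i$ and $k,q_i,q_j\in Q$ one gets $r\in Q$, hence $r\in Q\cap R$, and now the hypothesis gives $r\in K$. Therefore $q_j^{-1}q_i = kr \in K$, which forces $i=j$ by the original choice of $q_i$'s.

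Surjectivity of $\bigsqcup q_i KR$ onto $QR$ is then routine: any $qr\in QR$ with $q\in Q$, $r\in R$ satisfies $q=q_i k$ for some $i$ and some $k\in K$, whence $qr = q_i(kr)\in q_i KR$. Together with the disjointness of the $n$ cosets just established and the index count $|QR:KR|=n$, this completes the second decomposition. I expect the disjointness argument to be the only non-formal point; everything else is bookkeeping with indices and the identity $K\cap R = Q\cap R$.
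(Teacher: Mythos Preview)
Your proof is correct and follows essentially the same route as the paper: choose coset representatives of $K$ in $Q$, observe $QR=\bigcup_i q_iKR$, and for disjointness show that $q_j^{-1}q_i\in KR$ forces $q_j^{-1}q_i\in K$ using $Q\cap R\le K$. The only cosmetic differences are that the paper packages the disjointness step via the Dedekind identity $Q\cap KR=K(Q\cap R)$ rather than your element-wise isolation of $r$, and your index computation $|QR:KR|=n$ is a (harmless) redundancy once surjectivity and pairwise disjointness are in hand.
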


\begin{proof}

Since $Q=\bigsqcup_{i=1}^n \,q_i K$, we have that $QR=\bigcup_{i=1}^n \,q_i KR$. 
Suppose there exist $t$ and $s$ such that $q_t KR=q_s KR$. Then it follows that 
$q_s^{-1} q_t\in KR$ and thus $q_s^{-1} q_t\in (Q\cap KR).$ Note that since $K<Q$, 
we have that $Q\cap KR=K(Q\cap R)$, hence $q_s^{-1} q_t\in K (Q\cap R)=K.$ Thus 
$q_t K=q_s K$. It follows that $QR=\bigsqcup_{i=1}^n \,q_i KR$. 
\end{proof}

\begin{Proposition}\label{timesR}
Let $\FM$ be an indecomposable $p$-permutation $kG$-module with vertex $P$, and let $R\unlhd G$ 
be a $p$-subgroup such that $R\leq \ker(\FM)$. Assume that $Q\leq P$.
\begin{enumerate}
\item[\rm (i)]
It holds that $\FM(QR)=\FM(Q)$
as $k$-vector spaces and that $\Res_{N_G(Q)}^{N_G(QR)}\,\FM(QR) \cong \FM(Q).$  
\item[\rm (ii)]

If $\Res^{N_G(QR)}_{C_G(Q)}\, \mathfrak M(QR)$ is indecomposable, then
$\Res^{N_G(Q)}_{C_G(Q)}\, \mathfrak M(Q)$ is indecomposable.
\end{enumerate}
\end{Proposition}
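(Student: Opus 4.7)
The plan is to unfold the definition of the Brauer construction and verify that the numerator and denominator defining $\mathfrak{M}(QR)$ match those defining $\mathfrak{M}(Q)$, using the preparatory Lemmas \ref{tr1}, \ref{tr2}, \ref{correspondence} and \ref{cosets} to handle the two index sets involved. Recall that
\[
\mathfrak{M}(U)=\mathfrak{M}^U\Big/\sum_{V<U}\tr_V^U(\mathfrak{M}^V)
\]
for a $p$-subgroup $U\leq G$.

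First I would compare the fixed-point pieces. Since $R\leq\ker(\mathfrak{M})$, every element of $\mathfrak{M}$ is $R$-invariant, so a direct check on elements $qr\in QR$ gives $\mathfrak{M}^{QR}=\mathfrak{M}^{Q}$. More generally, for any $K\leq Q$ we get $\mathfrak{M}^{KR}=\mathfrak{M}^{K}$, since $R$ acts trivially. Next I would shrink the denominators. Lemma \ref{tr1} kills all summands indexed by $H<QR$ with $R\not\leq H$, so only $H$ in $I_2:=\{H\mid R\leq H<QR\}$ contribute to $\sum_{H<QR}\tr_H^{QR}(\mathfrak{M}^H)$. Symmetrically, Lemma \ref{tr2} kills all summands with $(Q\cap R)\not\leq K<Q$, leaving only $K\in I_1:=\{K\mid (Q\cap R)\leq K<Q\}$ contributing to $\sum_{K<Q}\tr_K^Q(\mathfrak{M}^K)$.

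Now I would invoke Lemma \ref{correspondence}(ii) to pair $K\in I_1$ with $H=KR\in I_2$ bijectively, and Lemma \ref{cosets} to choose simultaneous coset representatives $q_1,\ldots,q_n$ for $K$ in $Q$ and for $KR$ in $QR$. For $m\in\mathfrak{M}^K=\mathfrak{M}^{KR}$ this yields the pointwise identity
\[
\tr_K^Q(m)=\sum_{i=1}^n q_i m=\tr_{KR}^{QR}(m).
\]
Summing over the bijection gives $\sum_{K\in I_1}\tr_K^Q(\mathfrak{M}^K)=\sum_{H\in I_2}\tr_H^{QR}(\mathfrak{M}^H)$. Combined with $\mathfrak{M}^Q=\mathfrak{M}^{QR}$, this proves $\mathfrak{M}(Q)=\mathfrak{M}(QR)$ as $k$-vector spaces. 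For the module isomorphism in (i), since $R\unlhd G$ any $g\in N_G(Q)$ satisfies $g(QR)g^{-1}=QR$, so $N_G(Q)\leq N_G(QR)$, and the action of $N_G(Q)$ on the common underlying space is the restriction of the $N_G(QR)$-action, giving $\Res_{N_G(Q)}^{N_G(QR)}\mathfrak{M}(QR)\cong\mathfrak{M}(Q)$.

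Part (ii) is then essentially immediate from (i): $C_G(Q)\leq N_G(Q)\leq N_G(QR)$, and restricting the isomorphism in (i) from $N_G(Q)$ down to $C_G(Q)$ gives
\[
\Res_{C_G(Q)}^{N_G(Q)}\mathfrak{M}(Q)\;\cong\;\Res_{C_G(Q)}^{N_G(QR)}\mathfrak{M}(QR),
\]
so indecomposability of the right-hand side forces indecomposability of the left-hand side. The main obstacle I anticipate is purely bookkeeping—making sure that the bijection $K\leftrightarrow KR$ of Lemma \ref{correspondence} really does match up the trace summands in such a way that the coset representatives of Lemma \ref{cosets} identify $\tr_K^Q$ with $\tr_{KR}^{QR}$ on the common fixed-point space; once that identification is clean, both parts fall out.
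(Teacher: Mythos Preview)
Your proposal is correct and follows essentially the same route as the paper: reduce both denominators via Lemmas~\ref{tr1} and~\ref{tr2} to sums over $I_2$ and $I_1$, match them termwise through the bijection of Lemma~\ref{correspondence}(ii) using the common coset representatives of Lemma~\ref{cosets}, and then read off the module statement from $N_G(Q)\leq N_G(QR)$. Your argument for (ii) is in fact slightly more direct than the paper's, which argues by contrapositive and passes through $C_G(QR)$, but the substance is the same.
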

\begin{proof}
(i)
Let $(Q\cap R) \leq K <Q$, then by Lemma \ref{correspondence}(i), we have that 
$KR < QR$. 
Note that $\FM^K=\FM^{KR}$ since $R\leq \ker(\FM)$. 
Let $q_1,\cdots, q_n$ be the same as in Lemma \ref{cosets}.

Then for any $m\in \FM^K$, 
$\tr_K^Q(m)=\sum_{i=1}^n q_i m=\tr_{KR}^{QR}(m).$ Hence, 
\begin{equation}\label{1}\tr_K^Q(\FM^K)=\tr_{KR}^{QR}(\FM^{KR}).\end{equation}

Further by Lemma \ref{tr2}, we have that 
\begin{equation}\label{2}
\sum_{K<Q} \tr_K^{Q}(\FM^K)=\sum_{(Q\cap R)\leq K <Q} \tr_K^{Q}(\FM^K).\end{equation}

It follows from Lemma \ref{tr1} that 
\begin{equation}\label{3} \sum_{H<QR} \tr_H^{QR}(\FM^H)=\sum_{R\leq H < QR}\tr_H^{QR}(\FM^H).\end{equation} 
Now, Lemma \ref{correspondence}(ii) implies that 
\begin{equation}\label{4}
\sum_{R\leq H < QR}\tr_H^{QR}(\FM^H)=\sum_{(Q\cap R)\leq K<Q}\tr_{KR}^{QR}(\FM^{KR}).\end{equation}

Thus,
\begin{align*}
\sum_{K<Q} \tr_K^{Q}(\FM^K)=&\sum_{(Q\cap R)\leq K <Q} \tr_K^{Q}(\FM^K)\text\quad\text{ by (\ref{2})}
\\=&\sum_{(Q\cap R)\leq K <Q}\,\tr_{KR}^{QR}(\FM^{KR})\quad\text{ by (\ref{1})}
\\=&\sum_{R\leq H < QR}\tr_H^{QR}(\FM^H)\quad\text{ by (\ref{4})} 
\\=&\sum_{H<QR} \tr_H^{QR}(\FM^H)\quad\text{ by (\ref{3})}.
\end{align*}
 Therefore, we have that 
$$
\FM(Q)=\FM^Q\big{/} \big{(}\sum_{K < Q} \tr_K^Q(\FM^K)\big{)} 
=\FM^{QR}\big{/} \big{(}\sum_{H<QR} \tr_H^{QR}(\FM^H)\big{)} = \FM(QR)
$$
as sets. But the left and right hand sides, respectively, are a $k N_G(Q)$-module 
and a $k N_G(QR)$-module. Then,
the fact that $N_G(Q)\leq N_G(QR)$ 
implies that the right hand side can also be seen as a $kN_G(Q)$-module, which
competes the proof.

(ii)
Suppose that $\Res^{N_G(Q)}_{C_G(Q)}\,\FM(Q)$ is not indecomposable. 
By (i), we have that $\FM(QR) \cong \FM(Q)$ 
as $k\,N_G(Q)$-modules. So,
$\Res^{N_G(QR)}_{C_G(Q)}\,\FM(QR)$ is not indecomposable.
But since $C_G(QR)\leq C_G(Q)$,
it follows that
$\Res^{N_G(QR)}_{C_G(QR)}\,\FM(QR)$ is not indecomposable,
a contradiction.
\end{proof}

\section{Proof of Main Theorem and Corollary}
Even in this section we set $M:=\Sc(G,P)$ for a $p$-subgroup $P\leq G$.

\begin{proof}[{\bf Proof of Theorem~\ref{MainResult}}]
$(1)\Rightarrow (2):$ Follows from the definition of Brauer indecomposability.
 
$(2)\Rightarrow (1):$ Let $Q$ be an arbitrary subgroup of $P$. Then $R \leq QR\leq P$. 
By the assumption $\Res^{N_G(QR)}_{C_G(QR)}\,M(QR)$ is 
indecomposable. Hence, 
Proposition \ref{timesR}(ii)
implies that $M(Q)$ is indecomposable as a 
$k C_G(Q)$-module. Hence (1) follows.
\end{proof}

\begin{proof}[{\bf Proof of Corollary~\ref{indexP}}]
$(1)\Rightarrow (2):$ Follows from the definition of Brauer indecomposability.
 
$(2)\Rightarrow (1):$
Since $p\,{\not |}\,|N_G(P)/P C_G(P)|$, \cite[Lemma 4.3(i)]{KKM} 
implies that $\Res^{N_G(P)}_{C_G(P)}\,M(P)$ 
is indecomposable. So the assertion follows from the assumption $(2)$ and Theorem \ref{MainResult}.
\end{proof}

\begin{Remark}
Theorem \ref{MainResult} gives us a hidden idea about the reason
why the Scott modules in Ishioka's examples \cite[Examples 3.1 and 3.2]{I} are {\it not} 
Brauer indecomposable. 
Actually we do have the following two examples:
\end{Remark}

\begin{Example}\label{Ishioka2}
This is Example 3.2 in Ishioka's paper \cite{I}. Let
$$D_8:=\langle a, y, z  | \ a^2=y^2=z^2=1, [a, z]=[y,z]=1, [a, y]=z \rangle,$$
$$A_4:=\langle t, b, c  | \ t^3=b^2=c^2=1, [b, c]=1, b^t=c, c^t=bc \rangle \text{ and }$$ 
$$G:=\langle a, y, z \rangle \times \langle t, b, c \rangle \cong D_8 \times A_4.$$
Further set $x:=ab, \ R:=\langle y, z \rangle,\ P:=R \rtimes \langle x \rangle$. Then 
we have that $P\cong D_8$.
Moreover, 
$$G=(\langle b, c \rangle \rtimes \langle t \rangle) \rtimes P$$
where $R$ and $ab$, respectively, act on $\langle b, c \rangle \rtimes \langle t \rangle=A_4$ 
trivially and by conjugation.
From \cite{I}, we have that $\CF_P(G)$ is saturated and $M=\Ind_P^G\,k$. 
Hence, \cite[Lemma 4.3 (ii)]{KKM} implies that $M(P)$ is indecomposable as 
a $kC_G(P)$-module.
Note that $R\unlhd G$. Moreover since $M$ has a basis consisting of the left cosets of $P$ 
in $G$, we have that $R \leq \ker(M)$. 
It holds also that $C_G(R)=R \times A_4$ and so $C_G(R)\,P=G$. Hence 
$$\Res_{C_G(R)}^G M=(\Res_{C_G(R)}^G\circ\Ind_P^G)\,(k)=\Ind_{C_P(R)}^{C_G(R)}\,k
=\Ind_R^{R\times A_4}\,k \cong k A_4$$
and this is not indecomposable. 
 So by Corollary \ref{indexP} or from the definition of Brauer indecomposability, it follows that 
$M$ is not Brauer indecomposable. 

Moreover, by using the results in this paper, we can analyze the other Brauer quotients. For instance,
letting $Q=\langle x, z \rangle$,
since $QR=P$ and since $M(P)$ is $kC_G(P)$-indecomposable, it follows from Proposition  \ref{timesR}(ii)
 that $M(Q)$ is indecomposable as a $kC_G(Q)$-module.

\end{Example}

The following is an easy application of Corollary \ref{indexP}.

\begin{Example} Set $p:=2$,
$$D:=\langle a, y, z  | \ a^2=y^2=z^2=1, [a, z]=[y,z]=1, [a, y]=z \rangle \cong D_8\text{ and}$$ 
$$A_4:=\langle t, b, c  | \ t^3=b^2=c^2=1, [b, c]=1, b^t=c, c^t=bc \rangle.$$ 
Let $G:=D \times A_4$ as the same as previous example, and take 
$$P:=D \times \langle b \rangle \cong D_8 \times C_2.$$
Then $N_G(P)/P\,C_G(P)= (D \times (C_2\times C_2))/(D\times (C_2\times C_2))\cong 1$. 
We claim that $M=\Ind_P^G\,k$. 
Note that for 
$N:=N_G(P)=D \times V_4 \,\unlhd\, G$, the $kN$-module $V:=\Ind_P^N\,k$ is indecomposable 
by Green's 
Theorem. 

We know easily by \cite[II Theorem~12.4 and II Lemma~12.6]{Lan} that
$$\Ind_{C_2}^{A_4}\,k = \boxed
{\begin{matrix}
\ \ k \ \ 1_\omega \ \ 1_{\omega^2} \\
1_\omega \ 1_{\omega^2} \ k
\end{matrix}
}
\ \ \text{ (Loewy and socle series)}
$$
where 
$1_\omega$ and $1_{\omega^2}$ respectively are
non-trivial one-dimensional $kA_4$-modules corresponding to a primitive $3$-rd root of unity
and its dual
(actually this corrects a tiny mistake in \cite[(81.15)Lemma(iv)]{CR}, and we thank
 B. K{\"u}lshammer, who informed the mistake to us).
This implies that $\Sc(A_4, \langle b\rangle) = \Ind_{C_2}^{A_4}\,k$. Hence
$$
\Ind_N^G\,V=\Ind_P^G\,k=(\Inf_{G/D}^G\circ\Ind_{P/D}^{G/D})(k)
 = \Inf_{G/D}^G\Big(\Sc (A_4,C_2)\Big),
 $$ 
and hence this is indecomposable.
Thus
$M=\Ind_P^G\,k$.
Let $R:=D \cong D_8$
Then $R\unlhd G$ and $|P:R|=2$. Also we have $R\leq \ker(M)$ since $R$ acts trivially 
on the set of left cosets of $P$ in $G$ which is a basis for $M$. 
Now $C_G(R)= \langle z \rangle \times A_4$ and $G=P\,C_G(R)$
and $C_P(R)=\langle z \rangle \times C_2$, so that 
$$\Res_{C_G(R)}^G\,M=\Ind_{C_P(R)}^{C_G(R)}\,k
=\Ind_{ \langle z \rangle \times C_2}^{ \langle z \rangle \times A_4}\,k=
\Sc( \langle z \rangle \times A_4,  \langle z \rangle \times C_2),$$
which is an indecomposable $k C_G(R)$-module. Moreover, $N_G(P)/P\,C_G(P)$ 
is a $2'$-group. Therefore Corollary \ref{indexP}
tells us that $M$ is Brauer indecomposable.
\end{Example}

{\small
}

\end{document}